\newcommand{\Cb}{\mathbb C}
\newcommand{\Rb}{\mathbb R}
\newcommand{\Cpz}{\mathbb C \left[z\right]}
\newcommand{\Apr}{\mathcal A}
\newcommand{\Cpzn}[1]{\mathbb C^{(#1)}[z]}
\newcommand{\pin}{\pi^{(n)}}
\newcommand{\Lp}{\mathcal{L}\mathcal{P}}
\theoremstyle{plain}
\newtheorem{mythm}{Theorem}[section] 
\newtheorem{mylm}[mythm]{Lemma}
\newtheorem{myprop}[mythm]{Proposition}
\theoremstyle{definition}
\newtheorem{mydef}{Definition}[section]
\theoremstyle{remark}
\newtheorem*{myrem}{Remark} 
\begin{document}
\title[Linear operators preserving roots in open circular 
domains]{Linear operators on polynomials 
\\ preserving roots in open circular domains}
\date{}
\author{Eugeny Melamud}

\address{Department of Mathematics and Mechanics, 
St. Petersburg State University, 
28, Universitetskii pr., St. Petersburg, 
198504, Russia}

\email{eugeny.melamud@comapping.com}

\thanks{Linear operators on polynomial spaces, zeros of polynomials, circular domains, 
P\'olya--Schur theorem}

\subjclass{Primary 30C15; Secondary 32A60, 47B38.}

\dedicatory{}

\keywords{}

\maketitle
\begin{abstract}
In the present paper we answer a question raised by J. Borcea and P. Br\"and\'en 
and give a description of the class of operators preserving roots in open 
circular domains, i.e., in images of the open upper half-plane under 
the M\"obius transformations. Our second result 
is a description of the difference between  $\Apr(G)$ 
(the class of operators preserving roots in an open set 
$G$) and $\Apr(\overline G)$ (the class of operators 
preserving roots in $\overline{G}$). 
\end{abstract}

\section{Introduction}

Let $K$ be an arbitrary set in the complex plane, by $\pi(K)$ we  
denote the set of all polynomials with roots in $K$ and by $\pi_n(K)$ we  
denote the set of polynomials from $\pi(K)$ of degree at most $n$. 
As usual, we denote by 
$\Cpz$ the set of all polynomials with complex coefficients. 
We say that a linear operator $T$ belongs to the class $\Apr(K)$, if for any $p \in \pi(K)$ the polynomial $Tp$ belongs to $\pi(K) \cup \left\{0\right\}$. This paper is concerned with the problem of description of the class $\Apr(K)$. 

Let us point out that we do not assume operators from $\Apr(K)$ to be continuous. We will need continuity in several places, but in each case operator will act on a finite-dimensional subspace of $\Cpz$, hence $T$ will be continuous on that subspace. 

Despite of a long history of the problem, it is completely 
solved only for closed circular domains (half-plane and disk) 
and their boundaries (line and circle). Probably the first widely 
known result concerning linear operators and zero distribution, is the following P\'olya--Schur theorem, 
describing the so called multiplier sequences \cite{multiplier}.

\begin{mythm} $($P\'olya--Schur, \cite{multiplier}$)$
Let $T$ be a diagonal operator corresponding to a real sequence 
$\left\{a_k\right\}_{k=0}^\infty$ given by $T[z^k] = a_k z^k$. 
Then the following are equivalent:
\begin{enumerate}
\item The operator $T$ preserves polynomials with real zeros;
\item For any positive integer $n$ the polynomials 
$$T\left[(1+z)^n\right]$$ 
belong to the Laguerre--P\'olya class $\Lp(\mathbb{R}_+) \cup \Lp(\mathbb{R}_-)$; 
\item The series
$$T[e^z] = \sum_{k=0}^\infty \frac{c_k}{n!} z^k$$
converges in the whole plane and belongs 
to the class $\Lp(\mathbb{R}_+) \cup \Lp(\mathbb{R}_-)$.
\end{enumerate}
\end{mythm}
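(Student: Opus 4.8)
The plan is to prove the chain $(1)\Rightarrow(2)\Rightarrow(3)\Rightarrow(1)$. The main ingredients will be: the classical sign‑regularity of multiplier sequences; the description of $\Lp(\Rb_\pm)$ as locally uniform limits of real polynomials with all roots in a fixed half‑line, together with Hurwitz's theorem; Newton's inequalities; and a variant of Laguerre's theorem, namely that for $\operatorname{Im}\mu>0$ the operator $q\mapsto q+\mu q'$ sends a polynomial with all zeros in the closed lower half‑plane to one with all zeros again there, creating no new real zeros and dropping the multiplicity of each existing real zero by exactly one (this is immediate from inspecting $\operatorname{Im}(q'/q)$). Since the operators in question, restricted to $\Cpzn{n}$, act on a finite‑dimensional space, I will move freely between coefficientwise and (on bounded‑degree subspaces) locally uniform convergence. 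The implication $(1)\Rightarrow(2)$ is the easy one: $(1+z)^n$ has only the real root $-1$, so $T[(1+z)^n]$ is real‑rooted (or $0$); for the one‑signedness of its roots I would re‑derive the sign‑regularity of $\{a_k\}$ by applying $T$ to the real‑rooted polynomials $z^k(1+z)(c+z)$ and reading off from the discriminant of the image that $a_ka_{k+2}\ge0$ for all $k$ — combined with the real‑rootedness of every $T[(1+z)^m]$ this makes the nonzero $a_k$ either all of one sign, so $T[(1+z)^n]$ has nonnegative coefficients and roots in $\Rb_-$, or strictly alternating, so roots in $\Rb_+$.

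For $(2)\Rightarrow(3)$ I would apply $T$ to $(1+z/n)^n$. Since $T$ is diagonal, $T[(1+z/n)^n](z)=A_n(z/n)$ with $A_n:=T[(1+z)^n]$, which by $(2)$ is real‑rooted with all roots of one sign; passing to a subsequence along which that sign is fixed, say all roots in $\Rb_-$ so $A_n$ has nonnegative coefficients, and normalizing by the ($n$‑independent) coefficient $a_0$, write $A_n(z/n)=a_0\prod_j(1+z/\rho_j^{(n)})$ with $\rho_j^{(n)}>0$. Newton's inequality applied to $A_n$ bounds $\sum_j(\rho_j^{(n)})^{-2}$ uniformly in $n$, which yields a uniform bound $|A_n(iy/n)|\le|a_0|e^{Cy^2}$ on the imaginary axis and hence, with the obvious exponential bound on the real axis and Phragm\'en--Lindel\"of, local boundedness on $\Cb$. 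So $\{A_n(z/n)\}$ is a normal family, and since its coefficients converge to those of $\phi:=T[e^z]=\sum a_kz^k/k!$, it converges locally uniformly on $\Cb$ to $\phi$. Thus the series for $\phi$ converges everywhere, and $\phi$, a locally uniform limit of real‑rooted polynomials with roots in $\Rb_-$, lies in $\Lp(\Rb_-)$ (or is identically $0$).

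For the main implication $(3)\Rightarrow(1)$: replacing $a_k$ by $(-1)^ka_k$ if necessary (which preserves all three statements), I may assume $\phi\in\Lp(\Rb_-)$; after scaling and removing a factor $z^q$ I write $\phi(z)=e^{\sigma z}\prod_j(1+z/\gamma_j)$ with $\sigma\ge0$, $\gamma_j>0$, $\sum1/\gamma_j<\infty$, and approximate $\phi$ locally uniformly by the real polynomials $\Phi_N(z)=(1+\sigma z/N)^N\prod_{j\le N}(1+z/\gamma_j)$, all of whose roots lie in $(-\infty,0)$. The diagonal operators $T_N$ with $T_N[z^k]=k!([z^k]\Phi_N)z^k$ satisfy $T_NQ\to TQ$ locally uniformly for every polynomial $Q$ (polynomials of bounded degree whose coefficients converge), so it suffices to show each $T_N$ preserves real‑rootedness. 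The key identity, using $k![z^k]Q=Q^{(k)}(0)$, is
$$T_NQ(z)=\bigl[\,\Phi_N(zD_w)\,Q(w)\,\bigr]_{w=0},\qquad D_w=\frac{d}{dw}.$$
Fix $z$ with $\operatorname{Im}z>0$; then $\Phi_N(zD_w)=\prod_i(I+\nu_iD_w)$ is a composition of operators $I+\nu_iD_w$ with $\nu_i\in\{\sigma z/N\}\cup\{z/\gamma_j\}$, each satisfying $\operatorname{Im}\nu_i>0$. By the Laguerre‑type lemma above, applying these operators to the real‑rooted polynomial $Q$ keeps all zeros in $\{\operatorname{Im}\le0\}$ and, once the number of factors exceeds $\deg Q$ (true for all large $N$), removes every real zero; hence $\Phi_N(zD_w)Q$ has all its zeros in the open lower half‑plane and in particular does not vanish at $w=0$. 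Therefore $T_NQ(z)\ne0$ whenever $\operatorname{Im}z>0$, so the real polynomial $T_NQ$ has only real zeros; letting $N\to\infty$ and invoking Hurwitz's theorem gives that $TQ$ is real‑rooted (or $\equiv0$), i.e., $T\in\Apr(\Rb)$.

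The hard part will be $(3)\Rightarrow(1)$ — concretely, the Laguerre‑type lemma on $I+\mu\,d/dz$ with complex $\mu$ together with the bookkeeping of real‑zero multiplicities through successive applications; a secondary technical point is the upgrade from coefficientwise to locally uniform convergence in $(2)\Rightarrow(3)$, where the uniform control of $\sum_j(\rho_j^{(n)})^{-2}$ via Newton's inequalities is what makes the normal‑families argument go through. As an alternative to the differential‑operator argument, $(3)\Rightarrow(1)$ (equivalently the passage $(2)\Rightarrow(1)$) can be obtained from Grace's apolarity theorem via the Schur--Szeg\H{o} composition theorem applied to the polynomials $T[(1+z)^n]$.
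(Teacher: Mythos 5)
The paper does not actually prove this statement: it is the classical P\'olya--Schur theorem, quoted from \cite{multiplier} purely as background, so there is no in-paper argument to compare yours with; what follows judges your proposal on its own terms. Your architecture --- $(1)\Rightarrow(2)$ via sign-regularity of $\{a_k\}$, $(2)\Rightarrow(3)$ via $T[(1+z/n)^n]=A_n(z/n)$ plus a normal-family bound, and $(3)\Rightarrow(1)$ via the symbol identity $T_NQ(z)=\bigl[\Phi_N(z\,d/dw)Q(w)\bigr]_{w=0}$ together with the Laguerre-type lemma for $I+\mu\,d/dw$, $\Im\mu>0$ --- is the standard P\'olya--Schur--Laguerre proof, and the Laguerre-type lemma is correct exactly as you state it (inspect $\Im(q'/q)$). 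The Szeg\H{o}-composition alternative you mention for $(2)\Rightarrow(1)$ is also a valid classical route.

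There are, however, two genuine gaps in $(3)\Rightarrow(1)$. First, the parenthetical ``once the number of factors exceeds $\deg Q$ (true for all large $N$)'' is false precisely when $\sigma=0$ and $\phi$ has finitely many zeros, i.e.\ when $\phi$ is a polynomial: then $\Phi_N$ stabilizes at $\phi$ and the number of factors is $\deg\phi$, which can be smaller than $\deg Q$. In that case $\Phi_N(z\,d/dw)Q$ may well vanish at $w=0$ (take $\phi=1+z$, $Q=w^2$), so the asserted conclusion $T_NQ(z)\neq0$ fails as stated. The repair is short but must be made: by your own lemma, vanishing at $w=0$ forces $0$ to be a real zero of $Q$ of multiplicity larger than $\deg\Phi_N$, hence $q_k=0$ for all $k\le\deg\Phi_N$ and $T_NQ\equiv0$, which is an admissible outcome. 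Second, ``removing a factor $z^q$'' is not a legitimate reduction: the diagonal operator whose symbol is $z^q\psi(z)$ is not the diagonal operator with symbol $\psi$, nor any simple conjugate of it, so the case $a_0=\dots=a_{q-1}=0$ is left uncovered. You must either retain the factor, i.e.\ work with $\Phi_N(z\,d/dw)$ containing $(z\,d/dw)^q$, applying those $q$ differentiations after the $(I+\nu_i\,d/dw)$ factors and invoking Gauss--Lucas once all zeros lie in the open lower half-plane, or approximate $z^q$ by $\prod_{i=1}^q(z+\varepsilon)$, $\varepsilon>0$, and pass to one more Hurwitz limit. Two smaller points: in $(2)\Rightarrow(3)$ the normalization by $a_0$ needs the analogous caveat when $a_0=0$ (normalize by the lowest nonvanishing coefficient instead), and the Phragm\'en--Lindel\"of detour is unnecessary, since $\sum_j 1/\rho_j^{(n)}=|a_1/a_0|$ is independent of $n$ and already gives $|A_n(z/n)|\le|a_0|e^{C|z|}$ on all of $\Cb$; likewise in $(1)\Rightarrow(2)$ the sign-pattern analysis in the presence of zero entries of $\{a_k\}$ deserves the extra line you allude to but do not give.
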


The mentioned classes $\Lp(\mathbb{R}_+) $ and 
$\Lp(\mathbb{R}_-)$ play an important role in the 
theory of entire functions; they consist of entire functions 
which are limits, uniformly on compact sets, of polynomials with 
zeros respectively in $\mathbb{R}_+ = [0,\infty)$ or in 
$\mathbb{R}_- = (-\infty, 0]$. These 
classes are a key object in various classifications 
of operators preserving roots on the real axis.

Later a similar result was obtained for operators commuting with differentiation operator $D$ \cite{benz} and for operators commuting with inverted differentiation $z^2 D$ \cite{haakan}. Still the description of operators which preserve roots on the real axis or in the half-plane and do not satisfy some special conditions was an open problem till recently. A classification of linear operators preserving roots in circular domains and on their boundaries 
obtained recently by J. Borcea and P. Br\"and\'en \cite{borcea_branden_arxiv} was a real breakthrough. To introduce their result we will need the following definition. 
\begin{mydef}
Let $K$ be an arbitrary set in the complex plane. A polynomial $p \in \Cb[z_1,\ldots, z_n]$ is said to be $K$-stable, if $p(\zeta_1,\ldots,\zeta_n) \neq 0$ whenever $\zeta_i \in K$, 
$1 \leq i \leq n$. 
\end{mydef} 

Denote by $\Phi(z)$ the M\"obius transformation given by 
$$\Phi(z) = \frac{az + b}{cz + d}, \quad \text{ where } \quad ad - bc \neq 0.$$

For a linear operator $T: \Cpz \to \Cpz$, let us extend it to the set of bivariate polynomials $\Cb [z, w]$ as if $w$ is a constant, by putting $T[z^k w^m] = w^m T[z^k]$.

Denote by $H$ the  upper half-plane $\left\{z: \Im z > 0\right\}$ and put  
$C = \Phi^{-1}(H)$. The following theorem \cite{borcea_branden_arxiv} describes the class $\Apr(C')$, where $C' = 
\Cb \setminus C$ stands for the complement of $C$.

\begin{mythm} 
$($\cite{borcea_branden_arxiv}$)$
\label{closed1}
Let $T:\Cpz \rightarrow \Cpz$ be a linear operator. Then $T \in \Apr(C')$ if and only if, either
\begin{enumerate}
\item The range of $T$ is of dimension $1$ and $T$ can be written as
$$T(f) = \alpha(f)P,$$
where $\alpha$ is a linear functional, and $P \in \pi(C^{'})$, or
\item For any non-negative integer $n$ the polynomial
$$T\left[\left((az + b)(cw + d) + (aw + b)(cz + d)\right)^{n}\right]$$
is $C$-stable.
\end{enumerate}
\end{mythm}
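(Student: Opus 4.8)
The plan is to prove the two implications separately, deriving the ``if'' direction from the Grace--Walsh--Szeg\H{o} coincidence theorem via polarization, and the ``only if'' direction by transferring the root‑preserving property slice‑by‑slice to the bivariate extension of $T$ and then invoking the theory of stable polynomials. I would begin with two remarks on the symbol $S(z,w):=(az+b)(cw+d)+(aw+b)(cz+d)$. Using $az+b=(cz+d)\Phi(z)$ one has $S(z,w)=(cz+d)(cw+d)\bigl(\Phi(z)+\Phi(w)\bigr)$; since $\Phi^{-1}(\infty)\notin C$ and $\Phi(z)+\Phi(w)\neq0$ whenever $\Phi(z),\Phi(w)\in H$, each power $S(z,w)^{n}$ is $C$‑stable, which is why (2) is the natural P\'olya--Schur‑type condition. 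Secondly, for fixed $w$ the polynomial $S(\,\cdot\,,w)=\alpha(w)z+\beta(w)$ is affine in $z$, and its zero $\rho(w)=-\beta(w)/\alpha(w)$ is a M\"obius function of $w$ with $\Phi(\rho(w))=-\Phi(w)$; since $\Phi(C)=H$, $\rho$ restricts to a bijection of $C$ onto $\operatorname{int}(C')$, the single exceptional value $w^{\dagger}$ (where $\alpha(w)=0$, so $S(\,\cdot\,,w^{\dagger})$ is a non‑zero constant) corresponding to $\rho=\infty$. Thus every $\rho\in\operatorname{int}(C')$ arises as the zero of $S(\,\cdot\,,w)/\lambda$ for a unique $w\in C$ and some $\lambda\neq0$.

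For sufficiency, assume (2). By continuity of $T$ on each space $\Cpzn{d}$ and a Hurwitz argument (a non‑zero limit of polynomials with all zeros in the closed set $C'$ still has all zeros in $C'$), it suffices to show $Tp\in\pi(C')\cup\{0\}$ when $p=\gamma\prod_{i=1}^{d}(z-\rho_i)$ with all $\rho_i\in\operatorname{int}(C')$. Pick $w_i\in C$, $\lambda_i\neq0$ with $S(z,w_i)=\lambda_i(z-\rho_i)$, extend $T$ to $\Cb[z,w_1,\dots,w_d]$ acting on the variable $z$ only, and put $R(z,w_1,\dots,w_d):=T\bigl[\prod_{i=1}^{d}S(z,w_i)\bigr]$. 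This $R$ is symmetric in $w_1,\dots,w_d$ and of degree $\le1$ in each, and its diagonal restriction $R(z,w,\dots,w)=T[S(z,w)^{d}]$ is, by (2), identically zero or non‑vanishing on $C\times C$; fixing $z\in C$ and applying Grace--Walsh--Szeg\H{o} in $w_1,\dots,w_d$ over the circular domain $C$ shows $R$ is identically zero or non‑vanishing on $C^{d+1}$. In the first case the vanishing diagonal of a symmetric polynomial of degree $\le1$ in each $w_i$ forces $R\equiv0$, hence $Tp\equiv0$; in the second, $\prod_i S(z,w_i)=(\gamma^{-1}\prod_i\lambda_i)\,p(z)$ gives $R(z,w_1,\dots,w_d)=(\gamma^{-1}\prod_i\lambda_i)\,Tp(z)$, so $Tp$ has no zero in $C$. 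Either way $Tp\in\pi(C')\cup\{0\}$.

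For necessity, assume $T\in\Apr(C')$. If $\operatorname{ran}T$ has dimension $\le1$, write $T(f)=\alpha(f)P$; since $\pi(C')$ spans $\Cpz$ (it contains all products $\prod(z-\rho_i)$, $\rho_i\in\operatorname{int}(C')$), some $f_0\in\pi(C')$ has $\alpha(f_0)\neq0$, whence $P\in\pi(C')$ and we are in case~(1). Assume henceforth $\dim\operatorname{ran}T\ge2$; I must then show $G_n(z,w):=T[S(z,w)^{n}]$ is $C$‑stable or identically zero for every $n$. The key point is that $\Apr(C')$‑membership passes to slices: if $P(z,w)$ is non‑vanishing on $C\times C$ then each slice $P(\,\cdot\,,w_0)$, $w_0\in C$, lies in $\pi(C')\cup\{0\}$, hence so does $T[P(\,\cdot\,,w_0)]$, which is the $w=w_0$ restriction of the bivariate polynomial obtained by applying $T$ to $P$ in the first variable; so that bivariate polynomial is non‑vanishing on $C\times C$ unless it is identically zero or it vanishes on a whole slice $\{w=w_0\}$ with $w_0\in C$. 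Applying this to $P=S(z,w)^{n}$, (2) can fail for a given $n$ only if $G_n\not\equiv0$ while $G_n(\,\cdot\,,w_0)\equiv0$ for some $w_0\in C$; taking $w_0$ non‑exceptional (the alternative forces $T[1]=0$, a degenerate case) with $\rho_0:=\rho(w_0)\in\operatorname{int}(C')$, this says $T[(z-\rho_0)^{n}]=0$ although $T$ does not annihilate $\Cpzn{n}$.

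I expect the crux to be ruling out this last possibility. Substituting $(z-\rho_0)^{\,n-k}\prod_{i=1}^{k}(z-s_i)$, $s_i\in\operatorname{int}(C')$, into $T$, cancelling the leading term via $T[(z-\rho_0)^{n}]=0$, and letting the $s_i\to\rho_0$, one deduces that every small linear combination of $T[(z-\rho_0)^{0}],\dots,T[(z-\rho_0)^{\,n-1}]$ lies in $\pi(C')\cup\{0\}$, hence (rescaling) the whole span $U=\operatorname{span}\{T[(z-\rho_0)^{j}]:0\le j<n\}$ does; and a linear subspace of $\Cpz$ contained in $\pi(C')\cup\{0\}$ is at most one‑dimensional, since otherwise a pencil $q_1+\lambda q_2$ through two independent members would, for suitable $\lambda$, acquire a zero in $C$ while remaining in $\pi(C')$. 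Thus $T|_{\Cpzn{n}}$ has rank $1$, with range $\operatorname{span}B$ for some $B\in\pi(C')$. To contradict $\dim\operatorname{ran}T\ge2$ one would take the least $m>n$ at which the range of $T|_{\Cpzn{m}}$ grows and study $G_m(z,w)=T[S(z,w)^{m}]$, which then has the form $g(w)B(z)+\alpha(w)^{m}\,T[z^{m}](z)$ with $\alpha(w)^{m}\not\equiv0$; specializing non‑exceptional slices $w=w_1$ (so $\rho_1:=\rho(w_1)$ ranges over $\operatorname{int}(C')$) yields $T[(z-\rho_1)^{m}]=T[z^{m}]+\nu(\rho_1)B$ with $\nu$ a non‑constant polynomial of $\rho_1$, and one wants the pencil $T[z^{m}]+\nu B$ to stay in $\pi(C')\cup\{0\}$ for enough values of $\nu$ to conclude $T[z^{m}]\in\operatorname{span}B$. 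The merely local (neighbourhood‑of‑zero) control that these substitutions give is not enough for that step, and I would close it using the sharper input of the theory of stable polynomials --- the Lieb--Sokal lemma, equivalently the multivariate stability‑preserver machinery applied to the bivariate extension of $T$ --- while handling the degenerate cases (the exceptional value $w^{\dagger}$, and operators that annihilate a low‑degree subspace of $\Cpz$) separately. Granting this, (2) holds for all $n$ and the proof is complete.
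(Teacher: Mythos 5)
You should first be aware that the paper does not prove this statement at all: Theorem~\ref{closed1} is the Borcea--Br\"and\'en master theorem, imported verbatim from \cite{borcea_branden_arxiv} and used as a black box, so there is no ``paper proof'' to match; your proposal has to be judged as an attempted proof of the Borcea--Br\"and\'en theorem itself. Your sufficiency half (polarization plus Grace--Walsh--Szeg\H{o}) is the standard argument and is sound in outline, with two repairable inaccuracies: the GWS theorem for a \emph{non-convex} circular region (here $C$ can be the exterior of a closed disk) requires the symmetric multiaffine polynomial to have full degree in $w_1,\dots,w_d$, and the $w_1\cdots w_d$-coefficient of your $R(z_0,w_1,\dots,w_d)$, namely $T[(2acz+ad+bc)^d](z_0)$, may vanish; and when $\Phi(\infty)\in H$ one value $\rho^*\in\operatorname{int}(C')$ (with $\Phi(\rho^*)=-\Phi(\infty)$) is not of the form $\rho(w)$ with $w\in C$, so not every polynomial with roots in $\operatorname{int}(C')$ factors through your symbols; both points are fixable by perturbation/density and Hurwitz.

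The genuine gap is in the necessity half, exactly where you concede it. After your (correct) reduction, what must be excluded is: $T\in\Apr(C')$ with $\dim\operatorname{ran}T\ge2$, yet $T[(z-\rho_0)^n]=0$ for some $\rho_0\in\operatorname{int}(C')$ while $T|_{\Cb_n[z]}\neq0$ (plus the exceptional-slice case $T[1]=0$ when $w^\dagger\in C$). Your perturbation argument does give $T(\Cb_n[z])=\Cb B$ with $B\in\pi(C')$, and in the finite-degree setting of Theorem~\ref{closed2} this would already finish (rank one on $\Cb_n[z]$ is rank one globally there); but for $T$ on all of $\Cpz$ you must pass to the least $m>n$ where the range grows, and there you only obtain that $T[z^m]+\nu B\in\pi(C')\cup\{0\}$ for $\nu$ ranging over the image of the coefficient region of $\pi^{(m)}(\operatorname{int}(C'))$ under a nonzero linear functional --- an open, but in general proper, subset of $\Cb$ --- which need not contain the value $-T[z^m](z_0)/B(z_0)$ required to force a zero in $C$. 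You acknowledge this and propose to ``close it using the Lieb--Sokal lemma, equivalently the multivariate stability-preserver machinery,'' but that machinery is precisely the content (and the hard part) of the theorem being proved, so deferring to it begs the question; no argument is actually supplied. The ``degenerate'' case is likewise only flagged: if $w^\dagger\in C$ (which happens exactly when $C$ is a bounded disk) and $T[1]=0$, the slice at $w^\dagger$ is $\beta^nT[1]\equiv0$, and one must still show this forces rank $\le1$ --- here an argument does exist (for bounded $C$ and any $q$, $1+\epsilon q\in\pi(C')$ for small $\epsilon$, so $T[1]=0$ gives $\operatorname{ran}T\subset\pi(C')\cup\{0\}$, hence rank $\le1$ by your pencil lemma), but it is not in your text. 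As it stands, the necessity direction is not proved.
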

There is a similar theorem for operators acting on $\Cb_n[z]$, the space of all polynomial of degree at most $n$. 
\begin{mythm}
$($\cite{borcea_branden_arxiv}$)$
\label{closed2}
Let $T:\Cb_n[z] \rightarrow \Cpz$ be a linear operator. Then $T$ 
acts from $\pi_n(C')$ to $\pi(C') \cup \left\{0\right\}$ if and only if, either
\begin{enumerate}
\item The range of $T$ is of dimension $1$ and $T$ can be written as
$$T(f) = \alpha(f)P,$$
where $\alpha$ is a linear functional, and $P \in \pi(C^{'})$,
or
\item The polynomial
$$T\left[\left((az + b)(cw + d) + (aw + b)(cz + d)\right)^{n}\right]$$
is $C$-stable.
\end{enumerate}
\end{mythm}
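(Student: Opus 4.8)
The plan is to follow the method of \cite{borcea_branden_arxiv}: encode $T$ by a bivariate \emph{symbol}, recast the root-location problem as a stability problem for that symbol, and pass between univariate root location and multivariate stability via the Grace--Walsh--Szeg\H{o} coincidence theorem. Put $L(z)=az+b$, $M(z)=cz+d$ and $Q(z,w)=L(z)M(w)+M(z)L(w)$, so that $\Phi(z)=L(z)/M(z)$ and $Q(z,w)=0$ exactly when $\Phi(z)+\Phi(w)=0$. Since $ad-bc\neq 0$ the forms $L,M$ are linearly independent, hence $\{L^kM^{n-k}\}_{k=0}^n$ is a basis of $\Cb_n[z]$ and $T$ is completely recovered from $G(z,w):=T\bigl[Q(z,w)^n\bigr]=\sum_{k=0}^n\binom{n}{k}T[L^kM^{n-k}](z)\,L(w)^{n-k}M(w)^k$ and from its polarization $\widehat G(z;w_1,\dots,w_n):=T_z\bigl[\prod_{i=1}^n\bigl(L(z)M(w_i)+M(z)L(w_i)\bigr)\bigr]$, which is symmetric and affine in each $w_i$ and has diagonal $\widehat G(z;w,\dots,w)=G(z,w)$. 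Two elementary facts are used repeatedly: for $w\in C$ the linear polynomial $Q(\cdot,w)$ has its single root $r(w)$ in $\operatorname{int}C'$, and $w\mapsto r(w)$ is a bijection of $C$ onto $\operatorname{int}C'$ (because $\Phi(r(w))=-\Phi(w)$); consequently the degree-$n$ polynomials with all roots in $\operatorname{int}C'$ are, up to a constant, precisely the products $\prod_i\bigl(L(z)M(w_i)+M(z)L(w_i)\bigr)$ with $w_i\in C$, and these span $\Cb_n[z]$. The bookkeeping around polynomials of degree $<n$, around the point at infinity, and around the non-convex instances of $C$ (exteriors of disks) is cleanest when one homogenizes to degree-$n$ binary forms and works with circular regions in $\mathbb P^1$, which I suppress in what follows. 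Case (1) is immediate: if $T(f)=\alpha(f)P$ with $P\in\pi(C')$, then $Tf$ is $0$ or a constant multiple of $P$, hence lies in $\pi(C')\cup\{0\}$.

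For the ``if'' part of (2), assume $G$ is $C$-stable and let $p\in\pi_n(C')$. By Hurwitz's theorem it suffices to treat $p$ of degree exactly $n$ with all roots in $\operatorname{int}C'$; write $p=c\prod_{i=1}^n\bigl(L(z)M(w_i)+M(z)L(w_i)\bigr)$ with $w_i\in C$, so $Tp=c\,\widehat G(z;w_1,\dots,w_n)$. Fix $z_0\in C$. Then $\widehat G(z_0;\cdot)$ is symmetric and multiaffine in $(w_1,\dots,w_n)$ with $C$-stable diagonal $G(z_0,\cdot)$; the Grace--Walsh--Szeg\H{o} coincidence theorem then forces $\widehat G(z_0;w_1,\dots,w_n)\neq 0$ for all $w_i\in C$, so $Tp(z_0)\neq 0$. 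Hence $Tp$ has no root in $C$, i.e.\ $Tp\in\pi(C')\cup\{0\}$.

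For the ``only if'' part, assume $T$ maps $\pi_n(C')$ into $\pi(C')\cup\{0\}$ and that $T$ has rank $\geq 2$ (otherwise we are in case (1)); I must show $G$ is $C$-stable. Suppose not: then $T[Q(\cdot,w_0)^n](z_0)=G(z_0,w_0)=0$ for some $z_0,w_0\in C$; but $Q(\cdot,w_0)^n\in\pi_n(\operatorname{int}C')$, so $T[Q(\cdot,w_0)^n]\in\pi(C')\cup\{0\}$, and, vanishing at $z_0\in C$, it must be identically $0$. Thus $\ker T$ contains a degree-$n$ polynomial $p_0$ with all roots in $\operatorname{int}C'$. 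Let the roots of $p_0$ vary inside small disks contained in $\operatorname{int}C'$ about each of them, and expand the resulting monic $p$ in the Newton basis $N_j(z)=\prod_{i\le j}(z-\rho_i)$ adapted to the roots $\rho_1,\dots,\rho_n$ of $p_0$; then $Tp=\sum_{j<n}c_j(p)\,T[N_j]$, since the top coefficient multiplies $T[N_n]=Tp_0=0$, and $(c_0(p),\dots,c_{n-1}(p))$ ranges over a neighbourhood of the origin in $\Cb^n$ because the monic degree-$n$ polynomials with all roots in those disks form a neighbourhood of $p_0$. Since $\pi(C')\cup\{0\}$ is a closed, scaling-invariant cone, the whole span $\operatorname{span}\{T[N_0],\dots,T[N_{n-1}]\}=\operatorname{range}T$ therefore consists of polynomials with all roots in $C'$. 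Finally, if $f,g\in\operatorname{range}T$ are linearly independent, the roots of the pencil $f+tg$, as $t$ runs over $\Cb$, cover $\Cb$ minus the finite zero set of $g$; as $C=\Cb\setminus C'$ is open and non-empty, this contradicts $f+tg\in\pi(C')$. So $\operatorname{range}T$ is at most one-dimensional, contradicting rank $\geq 2$, and $G$ is $C$-stable.

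I expect the main obstacle to be this last direction, and within it the passage from ``$\ker T$ meets $\pi_n(\operatorname{int}C')$ in top degree'' to ``$\operatorname{range}T\subseteq\pi(C')$'': one must check that the perturbed Newton-coefficient vector really sweeps a full neighbourhood of $0$ even when $p_0$ has multiple roots (it does, since the monic polynomials with all roots in a small disc about $\rho$ form a neighbourhood of $(z-\rho)^m$), and one must be certain that the boundary-, infinity-, and degree-drop subtleties absorbed into the reduction and into Grace--Walsh--Szeg\H{o} are genuinely harmless; both are handled by the binary-form formulation. The ``if'' direction is routine once the polarization and the coincidence theorem are in place.
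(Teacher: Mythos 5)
You should first know that the paper never proves this statement: Theorem \ref{closed2} is quoted from Borcea--Br\"and\'en, so there is no proof in the paper to compare with; your outline is essentially a reconstruction of the original Borcea--Br\"and\'en strategy (symbol, polarization, Grace--Walsh--Szeg\H{o}, plus the kernel/rank analysis). Within that strategy, your ``only if'' half is essentially sound: the Newton-basis perturbation argument, the scaling-cone observation and the pencil argument are correct, up to two patchable points (when $C$ is a bounded disk the factor $Q(\cdot,w_0)$ can be a nonzero constant, so $p_0$ need not have degree $n$; and the rank-one alternative still requires the one-line check that $P\in\pi(C')$, using that $\alpha$ cannot vanish identically on $\pi_n(C')$).

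The genuine gap is in the ``if'' half, and it sits exactly in what you dismiss as suppressible bookkeeping, namely the case where $C$ is the exterior of a disk. There your Hurwitz reduction fails: if $C'$ is a closed disk, a polynomial of degree $m<n$ in $\pi_n(C')$ is \emph{not} a limit of degree-$n$ polynomials with all roots in the bounded set $\operatorname{int}C'$ (the leading coefficient would have to tend to $0$, and since the elementary symmetric functions of roots confined to a bounded set are bounded, all coefficients would tend to $0$). In the same case $C$ is not convex, so Grace--Walsh--Szeg\H{o} needs the coefficient of $w_1\cdots w_n$ in $\widehat{G}(z_0;\cdot)$, i.e. $T[(2acz+ad+bc)^n](z_0)$, to be nonzero, which you have not secured. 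Nor does homogenization ``handle'' this: when $C$ is an exterior domain, $\infty$ lies in the spherical closure (indeed interior) of $C$, so the degree-dropped members of $\pi_n(C')$ homogenize to binary forms that are \emph{not} $C$-stable and simply fall outside the projective statement --- the planar claim for them still has to be argued (or excluded by a convention on roots at infinity, which your sketch never fixes). That this is a real issue and not a formality is shown by $n=1$, $C=\{|z|>1\}$, $T[1]=z-2$, $T[z]=10z-5$: here $T[Q_1(z,w)]=2i\bigl(w(10z-5)-(z-2)\bigr)$ vanishes only when $w=(z-2)/(10z-5)$, and for $|z|>1$ one has $|z-2|\le|z|+2<10|z|-5\le|10z-5|$, so the symbol is $C$-stable; every $T[z-\alpha]$ with $|\alpha|\le 1$ has its root $(5-2\alpha)/(10-\alpha)$ of modulus at most $7/9$, yet $T[1]=z-2$ has its only root at $2\in C$. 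So condition (2) alone says nothing about the lower-degree inputs in this geometry, and the theorem survives only under the precise degree/infinity conventions of Borcea--Br\"and\'en; your proof must either adopt and use those conventions explicitly or supply a separate argument for this case.
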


In \cite{borcea_branden_arxiv} a similar description is also given for 
the class $\Apr(K)$ in the case where $K$ is a line or a circle. 
However, in the case where $K$ is an {\it open circular domain} 
the description of $\Apr (K)$ was formulated as an open 
problem in \cite{borcea_branden_arxiv, workshop} as well 
as in \cite[Section 5]{plms}, where conditions which are 
separately necessary or sufficient were obtained for an 
operator to preserve roots in the open half-plane.

The aim of this note is to give a complete description of 
operators preserving roots in an open circular domain which 
is similar to Theorem \ref{closed1}. It will be derived from 
Theorem \ref{closed1} and from Theorem \ref{difference}, describing 
the difference between the classes $\Apr(G)$ and $\Apr(\overline{G})$, 
where $G$ is an open set in $\Cb$. 
Theorem \ref{difference} is, in its turn, a corollary of the classical Hurwitz theorem.

\section{Difference between $\Apr(G)$ and $\Apr(\overline{G})$}

In this section $G$ is always an open set in the complex plane. First, let us show that $\Apr(G) \subset \Apr(\overline{G})$.

\begin{myprop}
Let $T: \Cpz \to \Cpz$ be a linear operator. If $T \in \Apr(G)$, then $T \in \Apr(\overline{G})$.
\end{myprop}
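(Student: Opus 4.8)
The plan is to deduce this from Hurwitz's theorem on the continuity of zeros. Suppose $T \in \Apr(G)$ and let $p \in \pi(\overline{G})$, say $\deg p = n$; we must show $Tp \in \pi(\overline{G}) \cup \{0\}$. The idea is to perturb $p$ so that its roots move from $\overline{G}$ into the open set $G$. Write $p(z) = a\prod_{j=1}^{n}(z - \zeta_j)$ with $\zeta_j \in \overline{G}$. Since $G$ is open and dense in $\overline{G}$ (every point of $\overline{G}$ is a limit of points of $G$), for each $j$ we can pick a sequence $\zeta_j^{(k)} \in G$ with $\zeta_j^{(k)} \to \zeta_j$ as $k \to \infty$. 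Set $p_k(z) = a\prod_{j=1}^{n}(z - \zeta_j^{(k)})$, so that $p_k \in \pi(G)$ and $p_k \to p$ coefficientwise (hence uniformly on compact sets) as $k \to \infty$.

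Next, apply $T$. Since $T \in \Apr(G)$, each $q_k := Tp_k$ lies in $\pi(G) \cup \{0\}$. All the $p_k$ and $p$ lie in the finite-dimensional space $\Cb_n[z]$, on which $T$ is continuous (as remarked in the introduction), so $q_k = Tp_k \to Tp =: q$ coefficientwise, and in particular uniformly on compact subsets of $\Cb$. It remains to conclude that $q \in \pi(\overline{G}) \cup \{0\}$ from the facts that $q_k \in \pi(G) \cup \{0\}$ and $q_k \to q$.

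This last step is where some care is needed, and I expect it to be the main (if mild) obstacle, because the degrees of the $q_k$ need not be constant and some $q_k$ may vanish identically. I would argue as follows. If $q \equiv 0$ we are done. Otherwise $q$ is a nonzero polynomial; let $w_0$ be any root of $q$, and suppose for contradiction that $w_0 \notin \overline{G}$. Then there is a closed disk $\overline{D}$ centered at $w_0$ with $\overline{D} \cap \overline{G} = \emptyset$, so no $q_k$ has a root in $\overline{D}$ (a nonzero $q_k$ has all roots in $G$, and the zero polynomial has none). Since $q \not\equiv 0$ and $q_k \to q$ uniformly on $\overline{D}$, for large $k$ we have $q_k \not\equiv 0$, and by Hurwitz's theorem the number of zeros of $q_k$ in the open disk $D$ equals the number of zeros of $q$ in $D$ for all large $k$; but $q$ has the zero $w_0 \in D$ while $q_k$ has none in $D$, a contradiction. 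Hence every root of $q$ lies in $\overline{G}$, i.e.\ $Tp = q \in \pi(\overline{G})$, which is what we wanted. (Alternatively, one can invoke Hurwitz in the form: a uniform limit on a domain of nowhere-vanishing holomorphic functions is either nowhere-vanishing or identically zero, applied to the $q_k$ restricted to a small disk around $w_0$; the conclusion is the same.)

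A remark on bookkeeping: passing to $q_k \to q$ uniformly on compacts is legitimate since coefficientwise convergence of polynomials of bounded degree implies local uniform convergence; and the density of $G$ in $\overline{G}$ used at the start is immediate from the definition of closure. No continuity of $T$ beyond finite-dimensional subspaces is used, consistent with the standing conventions of the paper.
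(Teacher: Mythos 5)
Your proof is correct, and it rests on the same underlying idea as the paper's --- approximate a polynomial with roots in $\overline{G}$ by polynomials with roots in $G$, then use continuity of $T$ on a finite-dimensional subspace together with continuity of zeros --- but you run the argument in the opposite direction. The paper argues by contradiction: given $p \in \pi(\overline{G})$ with $Tp \notin \pi(\overline{G})$, it shifts the roots of $p$ lying on $\partial G$ into $G$ one at a time, invoking a rather terse ``continuity argument'' to keep the image outside $\pi(\overline{G})$, until it produces $q \in \pi(G)$ with $Tq \notin \pi(G)$, contradicting $T \in \Apr(G)$. You instead argue directly: approximate all roots at once by a sequence in $G$ and apply Hurwitz to the image sequence $q_k = Tp_k$ to conclude that the limit $Tp$ either vanishes identically or has all its roots in $\overline{G}$. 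What your version buys is an explicit treatment of the degenerate possibilities (some $q_k$ identically zero, degrees of the $q_k$ dropping), which the paper's one-line continuity appeal glosses over; what the paper's version buys is brevity. Two small bookkeeping points: in your first application of Hurwitz you should also choose the radius of $D$ so that $q$ has no zeros on $\partial D$ (possible since $q \not\equiv 0$ has finitely many zeros) --- the alternative formulation of Hurwitz you mention sidesteps this; and your use of continuity is legitimate exactly as you say, since $T(\Cb_n[z])$ is finite-dimensional, so $q_k \to Tp$ coefficientwise within a space of bounded degree and hence locally uniformly, consistent with the paper's standing convention on continuity.
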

\begin{proof}
Assuming the opposite, we can find a polynomial $p \in \pi(\overline{G})$ such that $Tp \notin \pi(\overline{G})$. Then there exists a root $z_0$ of $p(z)$ lying on $\partial G$ (otherwise $Tp \in \pi(G)$). Let $p(z) = (z - z_0) r(z)$. Consider a family of polynomials:
$$p_{\zeta}(z) = p(z) - \zeta r(z) = (z - z_0 - \zeta) r(z).$$
By continuity argument, there exists 
$\zeta$ such that $z_0 + \zeta \in G$ and $Tp_\zeta \notin \pi(\overline{G})$. Applying the similar reasoning for each root of $p(z)$, which belongs to $\partial G$, we will get $q \in \pi(G)$ such that $Tq \notin \pi(G)$. We come to a contradiction with the condition $T \in \Apr(G)$. 
\end{proof}

However, $\Apr(G) \neq \Apr(\overline{G})$. It is easy to find an operator from $\Apr(\overline{G}) \setminus \Apr(G)$ (e.g., let $G$ be an open upper half-plane and let $(Tp)(z) = z p(z)$). 

Denote by $\Cpzn{n}$ the space of polynomials of degree exactly $n$, and denote by  $\pin(K)$ the set of polynomials of degree exactly $n$ with all roots in the set $K$. For $\mathcal{P} \subset \Cpz$, denote by $GCD(\mathcal{P})$ the greatest common divisor of all polynomials from $\mathcal{P}$.

\begin{mylm}
\label{second}
Let $G \subset \Cb$ be an open set and $T \in \Apr(\overline{G})$. Then the following are equivalent:
\begin{enumerate}
\item There exists $p \in \pin(G)$ such that $Tp \notin \pi(G) \cup \{0\}$;
\item $GCD(T(\pin(G)))$ has a root on $\partial G$.
\end{enumerate}
\end{mylm}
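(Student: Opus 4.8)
The plan is to prove the two implications separately, with the direction $(2) \Rightarrow (1)$ being the one that requires real work and the Hurwitz-type continuity argument.

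First, $(1) \Rightarrow (2)$. Suppose $p \in \pin(G)$ with $Tp \notin \pi(G) \cup \{0\}$. Since $T \in \Apr(\overline{G})$ and $p \in \pin(G) \subset \pi(\overline{G})$, we have $Tp \in \pi(\overline{G}) \cup \{0\}$; as $Tp \neq 0$ and $Tp \notin \pi(G)$, the polynomial $Tp$ must have at least one root $z_0 \in \partial G$. Now I claim every polynomial of the form $Tq$ with $q \in \pin(G)$ is divisible by $(z - z_0)$, which forces $(z-z_0) \mid GCD(T(\pin(G)))$ and gives $(2)$. To see the claim, fix any $q \in \pin(G)$ and consider the one-parameter family $q_t = (1-t)p + tq$ for $t \in (0,1)$; since $\pin(G)$ is (an open subset, in the degree-$n$ coefficient topology, of) a convex cone intersected with the degree-$n$ condition — more precisely, a convex combination of two degree-$n$ polynomials with all roots in the \emph{open} set $G$ again has degree $n$ and, by Hurwitz, all roots in $G$ — we get $q_t \in \pin(G)$ for all $t \in [0,1]$, the endpoints included. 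Hence $Tq_t \in \pi(\overline G)\cup\{0\}$ for every $t$. Write $(Tq_t)(z_0) = (1-t)(Tp)(z_0) + t (Tq)(z_0) = t\,(Tq)(z_0)$, using $(Tp)(z_0)=0$. If $(Tq)(z_0) \neq 0$, then for all small $t>0$ the value $(Tq_t)(z_0)$ is a nonzero number tending to $0$ as $t \to 0$, and one checks $Tq_t \neq 0$ for small $t$ (since $Tp \neq 0$ and $T$ is continuous on the relevant finite-dimensional space, so the leading behavior is controlled). Then $z_0$ is not a root of $Tq_t$, and by Hurwitz applied to the family $t \mapsto Tq_t$ — whose roots vary continuously and lie in $\overline G$ — a root escapes from near $z_0 \in \partial G$; tracking this as in the proof of the Proposition above, a small perturbation produces $q' \in \pin(G)$ with $Tq' \notin \pi(\overline G)$, contradicting $T \in \Apr(\overline G)$. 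Hence $(Tq)(z_0) = 0$, proving the claim. (The convexity/connectedness of $\pin(G)$ is what makes this work; if $G$ is not connected a little care is needed, but one can instead join $p$ to $q$ by a path inside a single component, or simply use that $z_0$ lies on $\partial G$ and perturb roots of individual polynomials as in the Proposition.)

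Next, $(2) \Rightarrow (1)$. Let $z_0 \in \partial G$ be a root of $D := GCD(T(\pin(G)))$. If $T(\pin(G)) \subseteq \{0\}$ the statement $(1)$ is immediate for any $p \in \pin(G)$ (assuming $\pin(G) \neq \emptyset$; if it is empty both conditions are vacuous and there is nothing to prove), so assume some $Tp \neq 0$. Every nonzero $Tq$, $q \in \pin(G)$, is divisible by $D$, hence vanishes at $z_0 \in \partial G$, so $Tq \notin \pi(G)$. Thus \emph{every} $p \in \pin(G)$ already witnesses $(1)$: either $Tp = 0$ — but then, since $D \neq $ const has $z_0$ as a root and $D$ divides the nonzero $Tp_0$, and $GCD$ is taken over the whole family, the set of $p$ with $Tp=0$ cannot be all of $\pin(G)$ — so pick $p$ with $Tp \neq 0$, and $Tp \notin \pi(G)\cup\{0\}$ as required.

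The main obstacle is the $(1)\Rightarrow(2)$ direction: specifically, upgrading "$Tp$ has a boundary root $z_0$" to "\emph{every} $Tq$ vanishes at $z_0$." This is where Hurwitz's theorem enters, and where one must handle the degree bookkeeping (ensuring $Tq_t$ does not drop to $0$ or lose its degree along the path, so that its zero set genuinely varies continuously) and the possible disconnectedness of $\pin(G)$. The cleanest formulation is to argue by contradiction: if some $Tq$ is nonzero at $z_0$, then $z_0$ is a removable boundary root along the convex path $q_t$, so Hurwitz lets a zero of $Tq_t$ stray outside $\overline G$ under an arbitrarily small perturbation of $q_t$ within $\pin(G)$, contradicting $T\in\Apr(\overline G)$. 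Everything else is routine continuity and divisibility.
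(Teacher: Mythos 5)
Your overall plan for $(1)\Rightarrow(2)$ — take a root $z_0\in\partial G$ of $Tp$, suppose some $q\in\pin(G)$ has $(Tq)(z_0)\neq 0$, and contradict $T\in\Apr(\overline G)$ by a small linear perturbation of $p$ by $q$ — is the same skeleton as the paper's, and your $(2)\Rightarrow(1)$ is fine. But the execution of $(1)\Rightarrow(2)$ has two problems, one of them fatal. First, the claim that $q_t=(1-t)p+tq\in\pin(G)$ for \emph{all} $t\in[0,1]$ because ``a convex combination of two degree-$n$ polynomials with all roots in the open set $G$ again has all roots in $G$'' is false: $\pin(G)$ is not convex, and Hurwitz says nothing about convex combinations. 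For $G=H$ and $n=2$, both $p=(z-i-10)^2$ and $q=(z-i+10)^2$ lie in $\pi^{(2)}(H)$, yet $\tfrac12(p+q)=(z-i)^2+100$ has the root $-9i\notin\overline H$. This part is repairable, since for your argument only small $t$ matters and there continuity of roots (this is the legitimate use of Hurwitz, and the one the paper makes for $p+\zeta q$ with $|\zeta|<\varepsilon$) does give membership in $\pin(G)$.

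The fatal gap is that no contradiction is actually derived. From $(Tq_t)(z_0)=t\,(Tq)(z_0)\neq 0$ for small $t>0$ you assert that ``by Hurwitz a root escapes from near $z_0$'' and that a small perturbation yields $q'\in\pin(G)$ with $Tq'\notin\pi(\overline G)$. Hurwitz only tells you that $Tq_t$ has roots \emph{near} $z_0$ for small $t$; those roots are entirely free to lie in $\overline G$ (and indeed must, by the hypothesis $T\in\Apr(\overline G)$), so nothing forces a root outside $\overline G$ — the statement you invoke is precisely the one to be proved. The missing idea, which is the heart of the paper's proof, is quantitative: after translating $z_0$ to $0$, write $(Tp)(z)=z^mR(z)$ with $R(0)\neq0$ and $Q=Tq$ with $Q(0)\neq 0$, choose a specific point $z^*\notin\overline G$ with $|z^*|$ small, and set $\zeta=-z^{*m}R(z^*)/Q(z^*)$. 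Because $Tp$ vanishes to order $m$ at $z_0$ while $Q$ is bounded away from zero there, $|\zeta|$ can be made smaller than the Hurwitz threshold $\varepsilon$, so $p+\zeta q\in\pin(G)$, while by construction $T(p+\zeta q)$ vanishes at $z^*\notin\overline G$ — a genuine contradiction with $T\in\Apr(\overline G)$. Without this choice of a perturbation tailored to a point outside $\overline G$ (or some equivalent device exploiting the order of vanishing of $Tp$ at $z_0$ versus $(Tq)(z_0)\neq0$), your proof of $(1)\Rightarrow(2)$ does not go through.
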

\begin{proof}
Implication $2 \Longrightarrow 1$ is obvious. Let us prove $1 \Longrightarrow 2$. Assuming the opposite, we can find two polynomials $p, q \in \pin(G)$ and  $\xi \in \partial G$ such that $(Tp)(\xi) = 0$, but $(Tq)(\xi) \neq 0$. Without loss of generality we may assume that $0 \in \partial G$ and $\xi = 0$. Let $m$ be a multiplicity of zero as a root of $T(p)$. Therefore $T(p)(z) = z^m R(z)$. If we put  $Q = Tq$, then $Q(0) \ne 0$. Since $G$ is open, applying the Hurwitz theorem we can find $\varepsilon > 0$ such that $p + \zeta q \in \pin(G)$ for all $\zeta$
with $|\zeta|<\varepsilon$. Thus, $T(p + \zeta q) = z^m R(z) + \zeta Q(z) \in \pi(\overline{G})$. Choose 
$z \notin \overline G$ with $|z|$ small enough so that
$$\left|\zeta\right| = \left|-z^m \frac{R(z)}{Q(z)}\right| < \varepsilon$$
Then $T(p + \zeta q)(z) = 0$, which contradicts the assumption 
that $T(p + \zeta q) \in \pi(\overline G)$.
\end{proof}
Actually, one can replace the second condition in the above lemma with a condition, which is much easier to check. We need a simple statement for that.
\begin{mylm}
\label{third}
Let $T: \pi_n (G) \to \pi (G) \cup \{0\}$ be a linear operator. If $n > k > m$, then $$GCD(T(\pi^{(m)} (G))) \ \vdots \ GCD(T(\pi^{(k)} (G))).$$
\end{mylm}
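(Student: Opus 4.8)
The plan is to reduce the claim to the single assertion that $g := GCD(T(\pi^{(k)}(G)))$ divides $T(p)$ for \emph{every} $p \in \pi^{(m)}(G)$; taking the greatest common divisor over all such $p$ then shows that $g$ divides $GCD(T(\pi^{(m)}(G)))$, i.e.\ that $GCD(T(\pi^{(m)}(G)))$ is divisible by $GCD(T(\pi^{(k)}(G)))$, which is exactly the statement. So I would fix $p \in \pi^{(m)}(G)$ and try to express $T(p)$ as a $\Cb$-linear combination of values of $T$ on members of $\pi^{(k)}(G)$, each of which is divisible by $g$ by definition of the GCD. Since $G$ is a nonempty open set it contains $k-m+1$ distinct points $a_0,\dots,a_{k-m}$; for $0\le i\le k-m$ I set
$$q_i(z)=p(z)\prod_{\substack{0\le j\le k-m\\ j\ne i}}(z-a_j).$$
Each $q_i$ has degree exactly $m+(k-m)=k\le n$ and all of its roots lie in $G$, so $q_i\in\pi^{(k)}(G)\subset\pi_n(G)$, and hence $g\mid T(q_i)$.

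The key (and only nontrivial) step is to recover $p$ itself from the $q_i$. This is Lagrange interpolation: applied to the constant function $1$ at the nodes $a_0,\dots,a_{k-m}$ it reads $1=\sum_{i=0}^{k-m}\prod_{j\ne i}\frac{z-a_j}{a_i-a_j}$, and multiplying through by $p(z)$ turns this into the identity $p=\sum_{i=0}^{k-m}c_i q_i$ with $c_i=\prod_{j\ne i}(a_i-a_j)^{-1}$. Applying $T$ and using its linearity then gives $T(p)=\sum_{i=0}^{k-m}c_i\,T(q_i)$, and since $g$ divides every summand it divides $T(p)$, as required.

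I do not anticipate a real obstacle; the argument is short. The two points that deserve a moment's attention are that one must build the degree-$k$ polynomials by \emph{multiplying} $p$ by linear factors vanishing in $G$ (dividing $p$ would not leave a polynomial of degree exactly $m$), and that the degenerate case in which all $T(q_i)$ vanish is covered automatically, since then the displayed identity forces $T(p)=0$ as well, so that $GCD(T(\pi^{(m)}(G)))=0$ too. Note that the argument uses only the inequalities $m<k$ and $k\le n$, exactly as hypothesized.
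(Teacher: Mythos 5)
Your proof is correct, and it rests on the same essential idea as the paper's: multiply $p$ by linear factors whose roots lie in $G$ to produce elements of $\pi^{(k)}(G)$, then recover $T(p)$ as a linear combination of their $T$-images. The execution differs, though. The paper reduces to consecutive degrees ($m=k-1$ without loss of generality, iterating down from $k$), uses just two shifted factors $(z-a)$ and $(z-a-\epsilon)$ so that $\epsilon\,(Tp) = T[(z-a)p]-T[(z-a-\epsilon)p]$, and then concludes by evaluating at a root $\zeta_0$ of $GCD(T(\pi^{(k)}(G)))$, with only a brief remark that the multiplicities are also inherited. Your Lagrange-interpolation identity $p=\sum_i c_i q_i$ with $q_i(z)=p(z)\prod_{j\ne i}(z-a_j)$ handles the gap from $m$ to $k$ in a single step and yields divisibility of $T(p)$ by $GCD(T(\pi^{(k)}(G)))$ directly, rather than merely a common root, so the multiplicity issue never arises; you also note explicitly the degenerate case where the GCD is zero. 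What the paper's route buys is brevity and minimal machinery (two points of $G$ and one telescoping step); what yours buys is a cleaner, one-shot argument in which the divisibility statement, multiplicities included, is immediate from the linear combination.
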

\begin{proof} Without loss of generality we may take $m = k - 1$. Let $a \in G$, one can find $\epsilon \in \Cb$ such that $a + \epsilon \in G$. For arbitrary $p \in \pi^{(k - 1)} (G)$ we have
$$T\left[(z-a-\epsilon)p(z)\right](\zeta) = \left[T\big((z - a)p\big)\right](\zeta) - \epsilon (Tp)(\zeta).$$
Denote by $\zeta_0$ a root of $GCD(T(\pi^{(k)} (G)))$. If we put $z = \zeta_0$ in the above equality, we get
$$(Tp)(\zeta_0) = 0.$$
So, every root of $GCD(T(\pi^{(k)} (G)))$ will also 
be a root of any polynomial from $T(\pi^{(k - 1)} (G))$ 
of at least the same multiplicity and, therefore, 
$$GCD(T(\pi^{(k - 1)} (G))) \ \vdots \ GCD(T(\pi^{(k)} (G))).$$
\end{proof}
Now we are ready to describe the difference between $\Apr(G)$ and $\Apr(\overline{G})$.
\begin{mythm}
\label{difference}
Let $G \subset \Cb$ be an open set. Consider a linear operator $T: \Cpz \to \Cpz$ preserving roots in $\overline{G}$. The following are equivalent:
\begin{enumerate}
\item The operator $T$ belongs to $\Apr(G)$;
\item If $k$ is a minimal non-negative integer such that $T[z^k]$ is not identically zero,
then $T[z^k]$ has no roots on $\partial G$.
\end{enumerate} 
\end{mythm}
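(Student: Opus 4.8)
The plan is to reduce Theorem~\ref{difference} to Lemma~\ref{second} and Lemma~\ref{third}, with the main work being to translate ``$GCD(T(\pin(G)))$ has a root on $\partial G$ for some $n$'' into the single condition on $T[z^k]$, where $k = \min\{j : T[z^j] \not\equiv 0\}$. First I would dispose of the trivial direction: if $T[z^k]$ has a root $z_0$ on $\partial G$, then $z^k \in \pi^{(k)}(\overline G)$ (all its roots are at $0$; if $0 \notin \overline G$ replace $z^k$ by $(z-a)^k$ for suitable $a \in G$, which also lies in $\pin[(k)]{G}$ and whose image, by Lemma~\ref{third}'s computation, is divisible by nothing new — more precisely $T[(z-a)^k]$ is a combination of $T[z^j]$, $j \le k$, hence divisible by $T[z^k]$ up to the argument below), so $T$ sends a polynomial with all roots in $\overline G$ (in fact in $G$, after the shift) to a polynomial vanishing on $\partial G$, and $T \notin \Apr(G)$. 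This gives $\neg(2) \Rightarrow \neg(1)$.

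For the substantive direction $\neg(1) \Rightarrow \neg(2)$: suppose $T \notin \Apr(G)$. Since $T \in \Apr(\overline G)$, there is some $p \in \pin(G)$ (for some $n$) with $Tp \notin \pi(G) \cup \{0\}$; taking $n$ minimal among such, Lemma~\ref{second} gives that $GCD(T(\pin(G)))$ has a root $\xi$ on $\partial G$. Here one must first check $n \ge k$: for $j < k$ every polynomial of degree exactly $j$ is killed by $T$ on a basis argument? No --- that is false in general, so instead I would invoke Lemma~\ref{third} in the contrapositive form. By Lemma~\ref{third}, $GCD(T(\pi^{(j)}(G)))$ is divisible by $GCD(T(\pin(G)))$ for every $j \le n$ with $j > $ (the relevant lower index); in particular, chasing the divisibility chain down, $GCD(T(\pi^{(k)}(G)))$ is divisible by $GCD(T(\pin(G)))$, hence also has the root $\xi \in \partial G$. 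So every polynomial in $T(\pi^{(k)}(G))$ vanishes at $\xi$. Finally I claim $T[z^k]$ itself vanishes at $\xi$: pick any $a \in G$; then $(z-a)^k \in \pi^{(k)}(G)$, so $T[(z-a)^k](\xi) = 0$. Expanding $(z-a)^k = \sum_{j} \binom{k}{j}(-a)^{k-j} z^j$ and using $T[z^j] \equiv 0$ for $j < k$, we get $T[(z-a)^k] = T[z^k]$, whence $T[z^k](\xi) = 0$, contradicting~(2).

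The step I expect to be the main obstacle is the bookkeeping that lets one descend the $GCD$ divisibility chain all the way to index $k$: Lemma~\ref{third} is stated for an operator $T \colon \pi_n(G) \to \pi(G) \cup \{0\}$, i.e. assuming $T$ already maps into $\pi(G)$, whereas here $T$ only preserves $\overline G$. I would handle this by noting that for the indices $j$ with $k \le j < n$ (with $n$ chosen minimal as above) every $p \in \pi^{(j)}(G)$ does satisfy $Tp \in \pi(G) \cup \{0\}$ — this is exactly the minimality of $n$ — so Lemma~\ref{third} applies verbatim to the restriction $T\colon \pi_{n-1}(G) \to \pi(G)\cup\{0\}$, giving the chain $GCD(T(\pi^{(k)}(G)))\,\vdots\,GCD(T(\pi^{(k+1)}(G)))\,\vdots\,\cdots\,\vdots\,GCD(T(\pi^{(n)}(G)))$, and Lemma~\ref{second} supplies a boundary root at the top. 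The only remaining subtlety is that $T[z^k]$ might itself be the constant term situation where $k=0$ and $T[1]$ is a nonzero constant with no roots at all, in which case condition~(2) holds vacuously and indeed $T$ cannot send any polynomial to something vanishing on $\partial G$ via the degree-$0$ slice — consistent with the theorem. I would remark on this edge case briefly and conclude.
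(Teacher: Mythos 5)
Your overall route is the same as the paper's: the easy direction via $T[(z-a)^k]=T[z^k]$ for $a\in G$, and the hard direction by combining Lemma \ref{second} with the divisibility chain of Lemma \ref{third} and the identification of $GCD(T(\pi^{(k)}(G)))$ with $T[z^k]$ up to a constant. However, the one step you single out as ``the main obstacle'' is exactly where your argument has a gap. You correctly observe that Lemma \ref{third} is stated for an operator mapping $\pi_n(G)$ into $\pi(G)\cup\{0\}$, a hypothesis $T$ does not satisfy here, and you propose to repair this by applying the lemma to the restriction of $T$ to $\pi_{n-1}(G)$ (legitimate by your minimal choice of $n$). But that restriction says nothing whatsoever about $T$ on $\pi^{(n)}(G)$, so it cannot produce the top link of your chain, namely that $GCD(T(\pi^{(n-1)}(G)))$ is divisible by $GCD(T(\pi^{(n)}(G)))$ --- and this is the only link that transports the boundary root $\xi$ supplied by Lemma \ref{second} down to lower degrees. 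The correct observation is that the \emph{proof} of Lemma \ref{third} never uses the mapping hypothesis: for $p\in\pi^{(j)}(G)$, $a\in G$ and small $\epsilon$ with $a+\epsilon\in G$, both $(z-a)p$ and $(z-a-\epsilon)p$ lie in $\pi^{(j+1)}(G)$, so their $T$-images vanish at any root of $GCD(T(\pi^{(j+1)}(G)))$ (to the relevant multiplicity), and linearity gives the same for $\epsilon\,Tp$. With that remark the descent from degree $n$ to degree $k$ goes through with no hypothesis on where $T$ sends $\pi^{(n)}(G)$; this is in effect how the paper itself invokes the lemma.

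A second, smaller point: your aside ``for $j<k$ every polynomial of degree exactly $j$ is killed by $T$ on a basis argument? No --- that is false in general'' is mistaken. Since $k$ is minimal with $T[z^k]\not\equiv 0$, we have $T[z^j]\equiv 0$ for all $j<k$, hence $T$ annihilates every polynomial of degree less than $k$. This is precisely the fact you use at the very end to conclude $T[(z-a)^k]=T[z^k]$, and it also settles $n\ge k$ immediately (a bad polynomial $p$ of degree $n<k$ would satisfy $Tp=0\in\pi(G)\cup\{0\}$, a contradiction); your proposed detour through Lemma \ref{third} does not by itself establish $n\ge k$, and without $n\ge k$ there is no chain to descend. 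With these two repairs your argument coincides with the paper's proof.
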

\begin{proof} The second statement trivially follows from 
the first one. Let us prove that 
$1 \Longrightarrow 2$. Assuming the converse, 
we can find a polynomial $p \in \pi(G)$ such that $Tp \notin \pi(G)\cup\{0\}$. 
Let $n$ be the degree of $p$. Applying Lemma \ref{second}, we  get that 
$GCD(T(\pin(G)))$ has a root on $\partial G$. 
Hence, by Lemma \ref{third}, 
$GCD(T(\pi^{(k)}(G))) = c T[z^k]$ has a root on $\partial G$. 
We arrive at a contradiction. 
\end{proof}
\begin{myrem} Let $T: \Cpz \to \Cpz$ be a linear operator. Consider its expansion as a series with respect to the differentiation operator $D$:
$$T = \sum_{k=0}^\infty Q_k D^k.$$
It is easy to see, that the second condition in Theorem \ref{difference} is equivalent to $Q_k(z) \neq 0$ for all $z \in \partial G$, where $Q_k$ is the first non-zero polynomial coefficient in the above expansion. 
\end{myrem}

In conclusion we give an example of an operator 
$T \in \Apr (\overline{G}) \setminus \Apr(G)$ 
such that $GCD(\Cb_{n}[z])$ has roots on $\partial G$ 
for some non-negative integer $n$, while $GCD(\Cb_{n+1}[z])$ does not. 
Let $G$ be the upper half-plane $H$. Consider an operator $T$ given by 
$$
(Tp)(z) = a_{n+1} - a_0 z,
$$
where $p(z) = \sum_{j=0}^\infty a_j z^j $ 
and only finitely many coefficients $a_j$ 
are non-zero. If all roots of $p$ are real, then the same is true 
for $Tp$. So $T \in \Apr(\Rb)$. It is known 
\cite[Section 3.1, Lemma 3]{borcea_branden_arxiv} that 
if $T \in \Apr(\Rb)$, then it maps $\pi(\overline{H})$ 
either to $\pi(\overline{H})$ or to $\pi(-\overline{H})$. 
It is easy to see that $T\left(\pi(\overline{H})\right) 
\subset \pi(\overline{H})$, so $T \in \Apr (\overline{G})$. 
Finally, $GCD(\Cb_{n}[z]) = z$ has a root on $\Rb$, while 
$GCD(\Cb_{n+1}[z]) = 1$ does not.
 
\section{Classification of linear operators preserving roots in open circular domain}
In this section we will derive 
from Theorems \ref{closed1} and \ref{difference}
a description of linear 
operators preserving roots in open circular domain. 
As in Section 1, $\Phi(z) = \frac{az + b}{cz + d}$ and $C = \Phi^{-1}(H)$, where $H$ is the open upper half-plane $\left\{z: \Im z > 0\right\}$. Denote by $C^r$ the set $Int(C')$, that is, the interior of the complement $ C'$ of $C$. 

For the sake of brevity, put
$$ Q_n(z, w)  = \big((az + b)(cw + d) + (aw + b)(cz + d)\big)^{n}.$$
The next theorem is an analogue of Theorem \ref{closed1} for open circular domains.

\begin{mythm} 
\label{mainth}
Let $T:\mathbb{C}[z] \rightarrow \mathbb{C}[z]$ be a linear operator. Then $T \in \Apr (C^r)$ if and only if either
\begin{enumerate}
\item The range of $T$ is of dimension $1$ and $T$ can be written as
$$T(f) = \alpha(f)P,$$
where $\alpha$ is a linear functional and $P \in \pi(C^{'})$, or

\item For any non-negative integer $n$ such that the polynomial $T[Q_n(z, w)]$ 
is not identically zero, 
$(TQ_n)(z, w) \neq 0$ whenever $z \in \overline{C}$ and $w \in C$.
\end{enumerate}
\end{mythm}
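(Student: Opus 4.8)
The plan is to obtain Theorem~\ref{mainth} from Theorem~\ref{closed1} and Theorem~\ref{difference}, as the introduction anticipates. The key preliminary observation is that $\overline{C^r}=C'$: the complement $C'$ of the open circular domain $C$ is the closure of its own interior (as one checks directly for a closed half-plane or for the complement of an open disc), so $\overline{\operatorname{Int}(C')}=C'$. Hence the Proposition at the start of Section~2, applied with $G=C^r$, gives $\Apr(C^r)\subseteq\Apr(C')$, while Theorem~\ref{difference}, applied with $G=C^r$ (so $\partial G=\partial C$), says that for $T\in\Apr(C')$ one has $T\in\Apr(C^r)$ exactly when $T[z^k]$ has no root on $\partial C$, where $k$ is the least index with $T[z^k]\not\equiv0$. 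So everything reduces to checking that the conjunction of Theorem~\ref{closed1}'s dichotomy with the condition ``$T[z^k]$ has no root on $\partial C$'' is equivalent to the dichotomy of Theorem~\ref{mainth}.

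I would isolate one geometric fact: for each $w\in C$ the univariate polynomial $z\mapsto Q_n(z,w)$ belongs to $\pi(C^r)$ (possibly as a nonzero constant). Indeed, factoring out $(cz+d)(cw+d)$ gives $Q_n(z,w)=\bigl((cz+d)(cw+d)\bigr)^n\bigl(\Phi(z)+\Phi(w)\bigr)^n$, so the roots of $Q_n(\cdot,w)$ are the $z$ with $\Phi(z)=-\Phi(w)$; as $\Phi$ carries $C$ onto $H$, $\partial C$ onto $\partial H$, and $C^r$ onto the lower half-plane, and $\Phi(w)\in H$ for $w\in C$, there is no such $z$ in $\overline C$, i.e. all roots lie in $\Cb\setminus\overline C=C^r$. (The same computation shows $Q_n(z,w)\ne0$ whenever $z\in\overline C$ and $w\in C$, which is exactly the shape of condition~(2) of Theorem~\ref{mainth}.)

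To prove the equivalence I would split on the dimension of the range of $T$. If it is $0$, then $T=0\in\Apr(C^r)$ and condition~(2) holds vacuously; if it is $1$, say $T(f)=\alpha(f)P$, then $T[z^k]$ is a nonzero scalar multiple of $P$ (for the least $k$ with $\alpha(z^k)\ne0$), so ``$P\in\pi(C')$ and $T[z^k]$ has no root on $\partial C$'' collapses to ``$P\in\pi(C^r)$'', i.e. to condition~(1). If the dimension is $\ge2$, the rank-one alternative of Theorem~\ref{closed1} is excluded, so $T\in\Apr(C')$ is equivalent to ``every $T[Q_n(z,w)]$ is $C$-stable'' (with the zero polynomial counted as $C$-stable, which only concerns $n<k$); it then remains to see that for such $T$ the condition ``$T[z^k]$ has no root on $\partial C$'' is equivalent to condition~(2) of Theorem~\ref{mainth}. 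Forward: that condition together with $T\in\Apr(C')$ gives $T\in\Apr(C^r)$ by Theorem~\ref{difference}, so by the geometric fact $T[Q_n(\cdot,w)]\in\pi(C^r)\cup\{0\}$ for every $w\in C$, and it is $\not\equiv0$ whenever $T[Q_n]\not\equiv0$ (otherwise $T[Q_n]$ would vanish identically on $C\times\{w\}$, contradicting $C$-stability), hence has no root in $\overline C$ --- which is condition~(2). Reverse: condition~(2) at $n=k$ reads $T[Q_k(z,w)]=\lambda(w)^k\,T[z^k](z)$ with $\lambda(w)$ the leading coefficient of $Q_1(\cdot,w)$ as a polynomial in $z$, so picking $w\in C$ with $\lambda(w)\ne0$ (such $w$ exists since $\lambda$ is a nonzero polynomial of degree $\le1$) forces $T[z^k]$ to be nonvanishing on $\overline C$, in particular on $\partial C$; combined with $T\in\Apr(C')$ (from Theorem~\ref{closed1}, as every $T[Q_n(z,w)]$ that is not identically zero is $C$-stable), Theorem~\ref{difference} yields $T\in\Apr(C^r)$.

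The hard part is really the geometric fact of the second paragraph: it is what turns the single-boundary-point criterion of Theorem~\ref{difference} into the \emph{asymmetric} nonvanishing condition ($z\in\overline C$ against $w\in C$) in Theorem~\ref{mainth}. The remaining work is bookkeeping around degeneracies --- $Q_n(\cdot,w)$ collapsing to a nonzero constant when $w$ is the zero of $\lambda$ (which can lie inside $C$), the cases where the range of $T$ has dimension $\le1$, and the harmless vanishing $T[Q_n]\equiv0$ for $n<k$, which is why the zero polynomial must be treated as $C$-stable when invoking Theorem~\ref{closed1}. I also expect that for the two sides to match exactly, condition~(1) of Theorem~\ref{mainth} should be read with $P\in\pi(C^r)$ rather than $P\in\pi(C')$.
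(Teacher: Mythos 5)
Your proposal is correct and takes essentially the same route as the paper's own proof: reduce to Theorem~\ref{closed1} together with Theorem~\ref{difference} (using $\overline{C^r}=C'$, $\partial C^r=\partial C$), use the geometric fact that $Q_n(\cdot,w)\in\pi(C^r)$ for $w\in C$ to get condition (2) from $T\in\Apr(C^r)$, and use the identity $T[Q_k(z,w)]=\lambda(w)^k\,T[z^k](z)$ at the minimal index $k$ to pass back from condition (2) to the boundary criterion of Theorem~\ref{difference}; you merely handle the degeneracies (identically vanishing symbols, the rank-one case) more explicitly than the paper, which dismisses them. Your closing remark is also a legitimate catch: with $P\in\pi(C')$ condition (1) is only necessary, not sufficient, for $T\in\Apr(C^r)$ (e.g.\ $T(f)=f(0)\,z$ with $C=H$ satisfies (1) but maps $z+i$ to $iz\notin\pi(C^r)\cup\{0\}$), so in the rank-one case one should indeed require $P\in\pi(C^r)$, a point glossed over by the paper's ``Case (1) is trivial.''
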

\begin{proof}
Case (1) is trivial, so let us assume, that the range of $T$ 
has dimension greater than $1$. First we prove the necessity of (2). Consider the equation
$$(az + b)(cw + d) + (aw + b)(cz + d) = 0,$$
which is equivalent to 
$$\frac{az + b}{cz + d} = - \frac{aw + b}{cw + d} \ .$$
If $w \in C$ and $z \in \overline{C}$, then 
$$\frac{az + b}{cz + d} = \Phi(z) \in \overline{H},$$
and, therefore,
$$- \frac{aw + b}{cw + d} \in \overline{H}.$$
Thus, $w \in C'$, but this contradicts the assumption $w \in C$. 
So we have shown that $Q_n(z, w) \neq 0$ if $z \in \overline{C}$ and $w \in C$. 
Hence, for any fixed $w_0 \in C$, the univariate polynomials $Q_n(z, w_0)$ 
belong to $\pi(C^r)$. Since $T \in \Apr(C^r)$, the same is true for 
the polynomials $T[Q_n(z, w_0)]$. Therefore $(TQ_n)(z, w) \neq 0$, 
if $z \in \overline{C}$ and $w \in C$.

We turn to the proof of sufficiency. It follows from condition (2) 
that the polynomials $T[Q_n(z, w)]$ are $C$-stable. Hence, by 
Theorem \ref{closed1}, $T \in \Apr(C')$. Let $k$ be the minimal non-negative integer such that $T[z^k] \neq 0$. By Theorem \ref{difference}, if $T \notin \Apr(C^r)$ then $T[z^k]$ should have roots on $\partial C^r = \partial C $. Therefore, $(TQ_k) (z, w) = \alpha(w) T[z^k]$ must vanish for some $z \in \overline{C}$ and $w \in C$. We arrive at a contradiction.
\end{proof}

\begin{myrem}
Condition (2) in Theorem \ref{mainth} may be replaced by a slightly 
weaker condition. Let $k$ be the minimal non-negative integer such that 
$T[z^k]$ is not identically zero. Then condition (2) holds if and only if 
all polynomials $T[Q_n(z, w)]$, $n > 0$, are $C$-stable, 
and $(TQ_k)(z, w) \neq 0$ for $z \in \overline{C}$ 
and $w \in C$. This can be seen by applying the same reasoning as in 
the proof of Theorem \ref{difference}.
\end{myrem}

However, one can not obtain similar classification of 
operators which act on the space $\Cb_n[z]$ and preserve 
roots in an open circular domain. The following statement 
could be an analogue of Theorem \ref{closed2} for open circular domains: 
\medskip

"{\it Let $T:\mathbb{C}_n[z] \rightarrow \mathbb{C}[z]$ be a linear operator. Then $T(\pi_n(C')) \subset \pi(C')$ 
if and only if either
\begin{enumerate}
\item The range of $T$ is of dimension $1$ and $T$ can be written as:
$$T(f) = \alpha(f)P,$$
where $\alpha$ is a linear functional and $P \in \pi(C^{'})$, or

\item $(TQ_n)(z, w) \neq 0$ whenever $z \in \overline{C}$ and $w \in C$."
\end{enumerate} }
\medskip

We show that this statement is not correct; 
namely, condition (2) turns out to be not sufficient. 
Consider the  operator $(Tp)(z) = p'(z) - zp(z)$. Let $L$ 
be the lower half-plane $\{z: \Im z < 0\}$, in this case 
$Q_n(z, w)=(z + w)^n$. First note that  $T[1] = z$ and 
so $T \notin \Apr (L)$. 
However, by Theorem \ref{closed1} and below reasoning 
$T \in \Apr (\overline{L})$. Let us evaluate $T\left[(z + w)^n\right]$:
$$
T\left[(z + w)^n\right] = n (z + w)^{n-1} - z(z + w)^n = (z + w)^{n-1} 
\left[n - z(z+w)\right].
$$
Suppose that $n > 0$. Let us see that the obtained polynomials do not 
vanish when $z \in \overline{H}$ 
and $w \in H$. Since $(z + w)^{n-1} \neq 0$ for 
$z \in \overline{H}$ and $w \in H$, we look at the polynomial 
$q(z,w) = n - z(z+w)$. For $z \in \overline{H}$ and $w \in H$, 
we have $0 \leq \arg z \leq \pi$, $0 < \arg w < \pi$, 
whence $0 \leq \arg (z + w) \leq \pi$ and  $0 < \arg[z(z + w)] < 2 \pi$. 
Clearly, in this case $q(z, w) \ne 0$. So, if $n > 0$, then 
$T\left[(z + w)^n\right] \neq 0$ for $z \in \overline{H}$ and 
$w \in H$. Thus, condition (2) is fulfilled but it is not sufficient
for the inclusion $T(\pi_n(L)) \subset \pi(L)$.

\section{Acknowledgements}
The author wishes to thank Anton Baranov for numerous advises 
and discussions. The author is also grateful to Petter Br\"and\'en 
for useful remarks, especially for pointing out an inaccuracy in the 
initial version of Lemma \ref{second}.


\begin{thebibliography}{9}

\bibitem{haakan}
A. Aleman, D. Beliaev, H. Hedenmalm, 
\textit{Real zero polynomials and P\'olya-Schur type theorems}, J. Anal. Math. {\bf 94} (2004), 49--60.

\bibitem{benz}
E. W. Benz, \textit{\"Uber lineare verschiebungstreue Funktionaloperatoren und die Nullstellen
ganzer Funktionen}, Comment. Math. Helv. {\bf 7} (1934), 243--289.

\bibitem{plms}
J. Borcea, P. Br\"and\'en, \textit{Multivariate P\'olya-Schur classification problems in the Weyl algebra},
Proc. Lond. Math. Soc. (3) {\bf 101} (2010), 1, 73--104. 

\bibitem{borcea_branden_arxiv}
J. Borcea, P. Br\"and\'en, \textit{P\'olya--Schur master theorems for circular domains and their boundaries},
Ann. of Math. (2) {\bf 170} (2009), 1, 465--492. 

\bibitem{multiplier}
G. P\'olya, I. Schur,  \textit{\"Uber zwei Arten von Faktorenfolgen 
in der Theorie der
algebraischen Gleichungen}, J. Reine Angew. Math. {\bf 144} (1914), 89--113.

\bibitem{workshop}
J. Borcea, P. Br\"and\'en, G. Csordas, V. Vinnikov,
\textit{Report on the workshop "P\'olya--Schur--Lax problems: hyperbolicity and stability preservers"}
(American Institute of Mathematics, Palo Alto, California, May 28 2007--June 1, 2007),
http://www.aimath.org/pastworkshops/polyaschurlaxrep.pdf.

\end{thebibliography}
\end{document}